\newtheorem{thm}{Theorem}
\newtheorem{prop}[thm]{Proposition}
\newtheorem{conj}[thm]{Conjecture}
\newtheorem*{claim}{Claim}
\newtheorem{ques}[thm]{Question}
\theoremstyle{remark}
\newtheorem{rem}[thm]{Remark}
\newcommand{\gsm}{g_{4}}
\newcommand{\gtop}{g_{4}^{{\rm top}}}
\newcommand{\rk}{{\rm rank} }
\newcommand{\Kmn}{\mathcal{K}_{m,n} }
\newcommand{\Z}{\mathbb{Z}}
\newcommand{\norm}[1]{\lVert #1 \rVert^2}
\def\labelm{\overbrace{\hspace{2.2cm}}^{2m+3\text{ crossings}}}
\def\labeln{\overbrace{\hspace{2.2cm}}^{2n+4\text{ crossings}}}
\def\et{\quad\mbox{and}\quad}
\keywords{2-bridge knots, smooth slice genus, topological slice genus}
\subjclass[2010]{57M25,  57M27}
\title{On 2-bridge knots with differing smooth and topological slice genera}
\author{Peter Feller}
\author{Duncan McCoy}
\address{Boston College, Department of Mathematics, Maloney Hall, Chestnut Hill, MA 02467, United States}
\address{School of Maths \& Statistics, University of Glasgow, 17 University Gardens, Glasgow, United Kingdom}
\date{}
\begin{document}

\begin{abstract}
We give infinitely many examples of 2-bridge knots for which the topological and smooth slice genera differ. The smallest of these is the 12-crossing knot $12a255$. These also provide the first known examples of alternating knots for which the smooth and topological genera differ.
\end{abstract}
\maketitle

\section{Introduction}
For a knot $K \subset S^3$, the smooth (resp.\ topological) slice genus, $\gsm(K)$ (resp. $\gtop(K)$), is defined to be the minimal possible genus of a properly embedded smooth (resp.\ locally flat) surface in $B^4$  with boundary $K$. The slice genera generalize the notion of sliceness in that, by definition, a knot $K$ is smoothly (resp.\ topologically) slice if and only if
$\gsm(K)$ (resp. $\gtop(K)$) is zero.

Since any smoothly embedded surface is also locally flat, we always have the inequality
\[\gtop(K) \leq \gsm(K).\]
It is a well-known phenomenon of knot theory and 4-dimensional topology that this inequality can be strict. For example, the $(-3,5,7)$-pretzel knot is topologically, but not smoothly, slice. The difference between the two genera can even be made arbitrarily large~\cite{Rudolph_84_SomeTopLocFlatSurf,KronheimerMrowka_Gaugetheoryforemb}. However, many of the invariants with which one could hope to prove $\gtop(K) < \gsm(K)$ cannot be applied when $K$ is 2-bridge. For example, when $K$ is alternating, the Rasmussen $s$-invariant and the Ozsv{\'a}th and Szab{\'o} $\tau$-invariant both give the same lower bound for $\gsm(K)$ as the signature $\sigma(K)$, and, hence, also give an identical lower bound for $\gtop(K)$~\cite{rasmussen10khovanov,Ozsvath03alternating}.
In fact, conjecturally, the topological and smooth notion of sliceness are the same for 2-bridge knots.
\begin{conj}[\cite{Miller_15_TopSlicenessOf2bridge,EisermannLamm_09,CassonGordon_86}]\label{conj:topslice=smoothslice}
For all $2$-bridge knots $K$, $\gsm(K)=0$ if and only if $\gtop(K)=0$.
\end{conj}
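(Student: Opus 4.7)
The forward implication is immediate since any smoothly embedded slice disc is locally flat. The content of the conjecture lies in the converse: a topologically slice 2-bridge knot must be smoothly slice. The plan is to combine Lisca's algebraic characterization of smoothly slice 2-bridge knots with Casson--Gordon obstructions, which apply equally in the topological category.

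Recall that by Lisca's theorem, the 2-bridge knot $K(p/q)$ is smoothly slice if and only if $p/q$ admits a very specific iterated continued fraction expansion; equivalently, the linking form on $H_1(L(p,q))$ admits a metabolizer of a highly constrained combinatorial form. The conjecture therefore reduces to showing that topological sliceness alone forces the existence of such a metabolizer. The natural tool is the Casson--Gordon invariant, associated to characters of prime-power order on $H_1(\Sigma_2(K))$ and taking values in $\mathbb{Q}/\mathbb{Z}$. Because Casson--Gordon invariants are computed from bounded 4-manifolds that need only be locally flat, they obstruct topological sliceness in exactly the same way as smooth sliceness: if $K$ is topologically slice, then there must exist a metabolizer for the linking form on $\Sigma_2(K)=L(p,q)$ all of whose characters yield vanishing Casson--Gordon invariants.

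The concrete plan is therefore: for every 2-bridge knot $K(p/q)$ whose continued fraction does \emph{not} have Lisca's form, show that no metabolizer of the linking form on $L(p,q)$ can have all of its characters produce vanishing Casson--Gordon invariants. Miller~\cite{Miller_15_TopSlicenessOf2bridge} has already carried this out for several infinite families, and Lisca's rigidity results drastically restrict which metabolizers need to be examined. The main obstacle is the uniform Casson--Gordon computation: one must rule out \emph{every} candidate metabolizer, not merely exhibit a single obstructing character. I expect the hardest cases to be those $K(p/q)$ for which $p$ is a square (so that elementary signature and Arf obstructions automatically vanish) and whose linking form admits multiple candidate metabolizers, none of Lisca's form; for these one must verify case-by-case that each candidate metabolizer is obstructed, presumably using the explicit Casson--Gordon formulas coming from the standard genus-one Seifert surface of a 2-bridge knot together with the classification of metabolizers of linking forms of lens spaces.
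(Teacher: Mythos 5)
There is a fundamental mismatch here: the statement you were asked about is a \emph{conjecture}, and the paper contains no proof of it --- it is cited as open (attributed to \cite{Miller_15_TopSlicenessOf2bridge,EisermannLamm_09,CassonGordon_86}), and the paper's actual contribution is orthogonal, namely that the higher-genus analogue of the conjecture is \emph{false}. Your proposal, by your own admission, is a research program rather than a proof: the decisive step --- showing that for \emph{every} $2$-bridge knot $K(p/q)$ whose fraction fails Lisca's criterion, \emph{every} metabolizer of the linking form on $L(p,q)$ carries a character with non-vanishing Casson--Gordon invariant --- is exactly the open problem. Miller's work establishes this only for particular infinite families, and you give no argument, even in outline, for the general case; phrases like ``I expect the hardest cases to be\dots'' and ``presumably using\dots'' mark the gap rather than fill it.

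Beyond incompleteness, the strategy has a structural weakness you should confront: there is no known implication from ``all Casson--Gordon invariants vanish on some metabolizer'' to ``the fraction has Lisca's form,'' and Casson--Gordon invariants are known to be insufficient to detect topological sliceness in general (this is the starting point of the Cochran--Orr--Teichner filtration), so the chain topologically slice $\Rightarrow$ CG-vanishing metabolizer $\Rightarrow$ Lisca form could simply fail at its second link even if the conjecture is true. Note also that Lisca's characterization of smoothly slice $2$-bridge knots comes from Donaldson-type lattice embeddings (the same tool this paper uses in Proposition~\ref{prop:smoothg}), not from Casson--Gordon theory, so bridging the two is a genuine conceptual obstacle, not bookkeeping. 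Finally, a concrete factual error: $2$-bridge knots do not in general admit genus-one Seifert surfaces --- their Seifert genus can be arbitrarily large (the knots $\Kmn$ of this paper have genus two, and twisting produces higher genus) --- so the ``explicit Casson--Gordon formulas coming from the standard genus-one Seifert surface'' you invoke do not exist for the knots in question.
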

The aim of this paper is to show that for 2-bridge knots the smooth and topological genera are different. In other words, the natural generalization of Conjecture~\ref{conj:topslice=smoothslice} to higher genera is false.

Let $\Kmn$ be, as depicted in Figure~\ref{fig:Kmn}, the 2-bridge knot corresponding to the rational number\footnote{{Here the 2-bridge knot corresponding to $p/q$ means the knot whose double branched cover is the lens space $L(p,q)$. By taking a continued fraction $p/q=[a_0, \dots, a_n]^+$ in which all the coefficients are positive, one can obtain an alternating 2-bridge diagram for such a knot in which the $a_i$ count the number of crossings in each twist region.}}
\[
[2m+3,1,2n+4,1,1,2]^+= \frac{20mn+56m+40n+107}{10n+28}.
\]

\begin{figure}[h]
\centering
\def\svgscale{2.5}
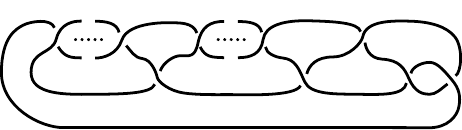
\caption{\label{fig:Kmn} The knot $\Kmn$}
\end{figure}
The main result of this paper is that for many choices of $m$ and $n$, the $\Kmn$ give examples of 2-bridge knots with differing topological and slice genera. To the best of the authors' knowledge, these are also the first known instances of alternating knots for which the smooth and topological genera differ.

\begin{thm}\label{thm:main}
For all $m,n\geq 0$, the smooth slice genus of $\Kmn$ is $\gsm(\Kmn)=2$. Furthermore, if any of the following hold:
\begin{enumerate}[(i)]
\item $m=n=0$,
\item $m+3$ is a perfect square or
\item $n+2$ is a perfect square,
\end{enumerate}
then $\gtop(\Kmn)=1$.
\end{thm}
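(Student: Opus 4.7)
The statement splits into four inequalities: $\gsm(\Kmn)\le 2$, $\gsm(\Kmn)\ge 2$, $\gtop(\Kmn)\ge 1$, and $\gtop(\Kmn)\le 1$ in cases (i)--(iii). I plan to address them in order of increasing difficulty.

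For $\gsm(\Kmn)\le 2$, I would exhibit two saddle bands visible in Figure~\ref{fig:Kmn} whose simultaneous surgery converts $\Kmn$ to the unknot; the associated ribbon surface then has genus $2$. For $\gtop(\Kmn)\ge 1$, I would compute the signature of $\Kmn$ directly from its alternating diagram; one expects $|\sigma(\Kmn)|=2$, and the topological signature bound $|\sigma(K)|\le 2\gtop(K)$ yields $\gtop(\Kmn)\ge 1$ and, in particular, that $\Kmn$ is not topologically slice.

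For the smooth lower bound $\gsm(\Kmn)\ge 2$, the signature (and hence any bound coming from the Rasmussen $s$-invariant or the Ozsv\'ath-Szab\'o $\tau$-invariant) is insufficient, so I would pass to the double branched cover of $\Kmn$, which, since $\Kmn$ is $2$-bridge, is the lens space $L(p,q)$ with $p=20mn+56m+40n+107$ and $q=10n+28$. A smooth genus-$g$ slice surface for $\Kmn$ lifts to a smooth $4$-manifold bounded by $L(p,q)$ of controlled topology, and the Heegaard Floer correction-term inequalities of Ozsv\'ath-Szab\'o (as applied to lens spaces by Jabuka-Naik, Owens-Strle, and others) translate into an effective lower bound on $g$ in terms of the $d$-invariants of $L(p,q)$. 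The plan is to compute these $d$-invariants in closed form as polynomials in $m$ and $n$ and show that at least one spin$^c$ structure rules out $g=1$.

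For the topological upper bound $\gtop(\Kmn)\le 1$ under hypotheses (i)--(iii), I would construct, in each case, a single saddle band that converts $\Kmn$ to a knot $K'$ with trivial Alexander polynomial $\Delta_{K'}(t)=1$; by Freedman's theorem $K'$ is topologically slice, so the band supplies a locally flat genus-$1$ surface bounded by $\Kmn$. The perfect-square hypotheses on $m+3$ and $n+2$ should correspond precisely to the existence of such a band: writing $\Delta_{K'}$ as a polynomial in $m$ and $n$, the condition $\Delta_{K'}=1$ should reduce to a small Diophantine equation whose integer solutions are captured by (ii) or (iii). Case (i) is the isolated knot $12a255$ and should be verifiable by direct computation. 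Finding the right band in cases (ii) and (iii) is the main creative step and, in my view, the principal obstacle; once the band is guessed, verifying the required Alexander polynomial identity is a routine computation in $\Z[t,t^{-1}]$.
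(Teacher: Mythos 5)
Your skeleton (signature for $\gtop\geq 1$; Freedman plus an Alexander-polynomial-one companion knot for $\gtop\leq 1$; a definite-filling obstruction on the double branched cover for $\gsm\geq 2$) matches the paper's, but two of your steps fail as stated and the two hard steps are left as plans. First, the band counts are wrong. An oriented saddle move changes the number of link components by exactly one, so a \emph{single} band can never turn the knot $\Kmn$ into a knot $K'$; your construction for $\gtop\leq 1$ is impossible on parity grounds, and what you actually need is a genus-one cobordism (two bands) to a knot with trivial Alexander polynomial. Likewise, two bands taking $\Kmn$ to the unknot would yield a genus-\emph{one} slice surface ($\chi = 1-2 = -1$), not genus two --- and since $\gsm(\Kmn)=2$, no such pair of bands exists; the easy upper bound comes instead from pushing the visible genus-two Seifert surface $S$ into $B^4$. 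Moreover, the ``main creative step'' you defer --- finding the right cobordism in cases (ii) and (iii) --- is precisely what the paper solves, and it does so algebraically rather than diagrammatically: it finds classes $a,b\in H_1(S,\Z)$ with algebraic intersection number one on which the Seifert form is $\left[\begin{smallmatrix}0&1\\0&*\end{smallmatrix}\right]$ (the form of a genus-one Seifert surface of a $\Delta=1$ knot), realizes them by once-intersecting simple closed curves $\alpha,\beta$, and takes $L=\partial(\text{nbhd}(\alpha\cup\beta))$, a separating curve with $\Delta_L=1$; cutting $S$ along $L$ and capping $L$ with Freedman's locally flat disk gives the genus-one surface. The perfect-square hypotheses fall out exactly as you guessed they should, but from a transparent source: solving ${\rm lk}(a,a^+)=0$ in the Seifert matrix, e.g.\ $a=[\delta_1]+\sqrt{m+2}\,[\delta_4]$, forces the square condition --- this is your ``small Diophantine equation,'' made explicit.

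For the smooth lower bound, your $d$-invariant plan is a legitimately different route from the paper's, and conceptually sound (it is, like Donaldson's theorem, a smooth-category obstruction, which is essential here since in cases (i)--(iii) the lens space \emph{does} bound a negative-definite topological manifold with $b_2=2$). But you leave the decisive computation entirely open, with no evidence the correction-term inequality is strong enough for this family. The paper instead combines Gordon--Litherland with Donaldson: the double cover of $B^4$ branched over the shaded Goeritz surface of the alternating diagram is positive definite with intersection form $G_D\cong Q_{m,n}$; if $\gsm(\Kmn)=1=-\sigma/2$, gluing this to the reversed double branched cover of the slice surface produces a closed positive-definite manifold, so Donaldson forces a lattice embedding $Q_{m,n}\hookrightarrow \Z^{\rk(Q_{m,n})+2}$. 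A hands-on analysis of chains of norm-2 vectors then shows any embedding of $Q_{m,n}$ requires at least $\rk(Q_{m,n})+3$ coordinates (indeed $+4$ when $m\geq 1$), a contradiction. If you pursue the $d$-invariant version, you must actually carry out the analogue of this lattice-theoretic endgame --- ruling out \emph{every} rank-two negative-definite form and every gluing compatible with the correction terms of $L(p,q)$ --- which is comparable in difficulty to the paper's embedding argument, not a routine afterthought.
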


Given that all our examples satisfy $\gsm(K)-\gtop(K)\leq 1$, it seems natural to ask the following.
\begin{ques}\label{ques:differences}
Are there 2-bridge knots for which the difference $\gsm(K)-\gtop(K)$ is arbitrarily large?
\end{ques}
Although it is seems likely that this has an affirmative answer, proving this would require exhibiting 2-bridge knots for which $2\gsm(K)-|\sigma(K)|$ is arbitrarily large. In particular, addressing Question~\ref{ques:differences} would require an approach to the smooth slice genus differing from the one taken in this paper: the best our method can achieve is to show $2\gsm(K)-|\sigma(K)|\ne 0$.
\begin{rem}
The knot $\mathcal{K}_{0,0}$ appears in the tables as $12a255$ \cite{Knotinfo}. With the possible exception of $11a211$, for which $\gsm(11a211)=2$, but the authors are unable to determine the topological slice genus, there are no alternating knots with 11 or fewer crossings for which the smooth and topological slice genus differ \cite{Mccoy15calculating}.
\end{rem}
\begin{rem}
Consider $\Kmn$ such that $\gtop(\Kmn)=1$. Since $\sigma(\Kmn)=-2$, the double branched cover $\Sigma(\Kmn)$ is a lens space bounding a negative-definite topological manifold with $b_2=2$. However, the proof of Theorem~\ref{thm:main} implies that this 4-manifold cannot be smooth. In fact, it follows from the proof of Proposition~\ref{prop:smoothg} that if $m>0$, then $\Sigma(\Kmn)$ cannot bound a smooth negative-definite manifold with $b_2<4$.
\end{rem}

\subsection*{Acknowledgments}

The first author gratefully acknowledges support by the Swiss National Science Foundation Grant 155477.
This work began whilst the second author was visiting Boston College. He would like to thank Josh Greene for his hospitality throughout his stay. He is also grateful to the College of Science and Engineering, University of Glasgow, for providing the funding that made his visit possible. The authors would also like to thank the anonymous referee for their comments and suggestions.

\section{Proof of Theorem~\ref{thm:main}}
In this section, we prove the main result of this paper. Since $\Kmn$ possesses a genus two Seifert surface, as shown in Figure~\ref{fig:Knm}, and has signature $\sigma(\Kmn)=-2$, we always have
\[1\leq\gtop(\Kmn)\leq \gsm(\Kmn)\leq 2.\]
The statements concerning the topological genus in Theorem~\ref{thm:main} are established by using Freedman's Disk theorem to construct a locally flat genus one surface bounding $\Kmn$. The smooth part of Theorem~\ref{thm:main} is established by using Donaldson's diagonalisation theorem to show $2\gsm(\Kmn)\ne |\sigma(\Kmn)|$. These steps are carried out in the following two sections.

\subsection{The topological genus}
\begin{prop}\label{prop:topsliceKnm=1}
If $n=m=0$, $m+3$ is a perfect square, or $n+2$ is a perfect square, then
the topological slice genus of $\Kmn$ is 1.
\end{prop}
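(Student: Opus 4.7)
Since $|\sigma(\Kmn)| = 2$ forces $\gtop(\Kmn) \geq 1$, the content of the proposition is to construct a locally flat genus one surface in $B^4$ bounding $\Kmn$. My plan is to start with the obvious genus two Seifert surface $F$ coming from the alternating diagram of $\Kmn$, push its interior into $B^4$, and reduce its genus by one via ambient surgery along a disk supplied by Freedman's disk theorem. Specifically, it suffices to produce a non-separating simple closed curve $\gamma \subset F$ with (a) vanishing self-Seifert pairing and (b) trivial Alexander polynomial when viewed as a knot in $S^3$. Condition (a) ensures the zero-framed push-off $\gamma^+$ cobounds an annulus with $\gamma$ inside a collar of $F$, while (b) lets Freedman's theorem produce a locally flat slice disk for $\gamma^+$ in $B^4$. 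After the usual general-position argument placing this disk in the complement of the pushed-in $F$, ambient surgery turns $F$ into a locally flat genus one surface with the same boundary $\Kmn$.

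To find such a $\gamma$, I would fix a symplectic basis $\{a_1,b_1,a_2,b_2\}$ of $H_1(F;\Z)$ represented by curves drawn directly on the diagram, compute the Seifert matrix of $F$ in terms of $m$ and $n$, and look for vanishing of the self-Seifert pairing within a natural one-parameter family of primitive classes of the form $\alpha + k\beta$. This self-pairing is a quadratic polynomial in $k$ whose coefficients depend on $m$ and $n$, and the perfect-square conditions $m+3 = \square$ and $n+2 = \square$ are expected to appear precisely as the discriminant conditions for this quadratic to have an integer root, corresponding to two natural choices of the pair $(\alpha,\beta)$ adapted to the two twist regions of Figure~\ref{fig:Kmn}. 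For each resulting $\gamma$ one then exhibits, via an explicit isotopy on the diagram, that $\gamma$ is in fact an unknot in $S^3$, so that (b) holds trivially.

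The main obstacle is this last diagrammatic step: visualising the candidate curve on the alternating diagram of $\Kmn$ and finding an explicit sequence of Reidemeister moves trivialising it, ideally uniformly in $m$ or $n$. The exceptional case $m=n=0$ of $\mathcal{K}_{0,0} = 12a255$ will probably require an \emph{ad hoc} treatment, either because the relevant candidate curve remains unknotted in this boundary case even though neither $3$ nor $2$ is a square, or because a different simple closed curve on $F$ with vanishing self-pairing can be identified as unknotted by hand. The full strength of Freedman's theorem (knots of Alexander polynomial one) is in principle available should a candidate curve turn out to be merely Alexander-trivial rather than literally unknotted, but producing explicit unknots on $F$ is the cleanest route and ought to suffice in all three cases.
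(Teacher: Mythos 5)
Your overall skeleton (genus-two Seifert surface, a curve with trivial Alexander polynomial, Freedman's disk theorem to reduce genus by one) is the right one, and your instinct about where the square conditions come from is accurate: in the paper's parametrization the Seifert matrix of the surface in Figure~\ref{fig:Knm} is \eqref{eq:Seifertmatrix}, and the one-parameter family $a=[\delta_1]+k[\delta_4]$ has self-pairing ${\rm lk}(a,a^+)=k^2-(m+2)$, so zero self-pairing is exactly a perfect-square condition. But your plan has a genuine gap at your condition (b). A primitive class in $H_1(S;\Z)$ with vanishing self-pairing is represented by infinitely many simple closed curves of wildly different knot types, and for a \emph{non-separating} curve $\gamma$ the Seifert form of the ambient surface gives no formula for $\Delta_\gamma$, since $\gamma$ bounds no subsurface of $S$. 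Your proposed remedy\textemdash find the candidate curve on the diagram and unknot it by explicit Reidemeister moves, uniformly in $m$ and $n$\textemdash is precisely the hard part, and nothing in your homological setup makes any representative unknotted or even Alexander-trivial. You have correctly identified this as "the main obstacle," but acknowledging it does not fill it; as written, the proof does not go through.

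The missing idea, which is how the paper closes exactly this gap, is to control the Alexander polynomial algebraically by using \emph{two} classes and a \emph{separating} curve. One finds $a,b\in H_1(S;\Z)$ with algebraic intersection number one such that the Seifert form restricted to their span is $\left[\begin{smallmatrix}0&1\\0&*\end{smallmatrix}\right]$ (or $\left[\begin{smallmatrix}*&1\\0&0\end{smallmatrix}\right]$), the Seifert matrix of an Alexander-trivial genus-one knot. Any realization of $a,b$ by simple closed curves $\alpha,\beta$ meeting transversally in a single point (always possible, e.g.\ \cite[Third proof of Theorem~6.4]{FarbMargalit_12_APrimerOnMCG}) yields a neighborhood $C$ of $\alpha\cup\beta$ whose boundary $L$ separates $S$ into two genus-one pieces; since $C$ is a Seifert surface for $L$, its Seifert matrix is the displayed $2\times 2$ matrix and $\Delta_L=1$ \emph{automatically}, independently of the knot types of $\alpha$, $\beta$, or $L$. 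Cutting off the genus-one piece and capping $L$ with the Freedman disk finishes the argument; the case $m=n=0$ is handled inside the same algebraic framework (take $a=[\delta_1]+[\delta_4]$, $b=[\delta_1]+[\delta_2]+2[\delta_4]$), not ad hoc. Two smaller points: even granting an Alexander-trivial non-separating $\gamma$, your "usual general-position argument" is insufficient\textemdash in dimension four a disk and a surface generically intersect in points, so one must either cut $S$ along $\gamma$ and cap the two resulting copies with a Freedman disk and its push-off by the normal framing (this is where ${\rm lk}(\gamma,\gamma^+)=0$ enters), or keep the surface in a collar of $S^3$ and nest the disks in a smaller ball; and note that the square conditions produced by the Seifert matrix read $m+2$ and $n+3$ in the proof's indexing, so any write-up must reconcile the parametrization with the statement.
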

{The strategy of proof for Proposition~\ref{prop:topsliceKnm=1} is to find an essential separating curve $L$ on a minimal genus Seifert surface $S$ for $K$ with trivial Alexander polynomial. This allows us to establish that the topological slice genus is strictly smaller than the genus of $S$ by invoking Freedman's Disk Theorem~\cite{Freedman_82_TheTopOfFour-dimensionalManifolds}.}

This idea was first used by Rudolph to show that the $\gtop(K)<\gsm(K)$ for most torus knots~\cite{Rudolph_84_SomeTopLocFlatSurf}. The curve $L$ is found by {considering the} Seifert form; compare also~\cite
{Feller_15_DegAlexUpperBoundTopSliceGenus} and~\cite{BaaderLewark_15_Stab4GenusOFAltKnots} for this type of argument.

\begin{proof}[Proof of Proposition~\ref{prop:topsliceKnm=1}]
We establish Proposition~\ref{prop:topsliceKnm=1} by showing that $\Kmn$ has a genus two Seifert surface $S$ {containing a} simple closed curve $L$ such that $\Delta_L=1$ and $L$ separates $S$ into two connected components of genus one. Here, $\Delta_L$ denotes the Alexander polynomial of a knot $L\subset S^3$. Indeed, this suffices to show $\gtop(\Kmn)\leq 1$ by the following argument:
removing the connected component $C$ of $S\setminus L$ that does not contain $\Kmn$, one obtains a genus one surface $\overline{S\setminus C}\subset S^3$ with boundary $\Kmn\sqcup L$.
By Freedman's {D}isk Theorem~\cite[Theorem~1.13]{Freedman_82_TheTopOfFour-dimensionalManifolds}, $L$ bounds a topological locally flat disc $B^2$ in $B^4$.
Gluing $\overline{S\setminus C}$ and $B^2$ along $L$ yields a locally flat surface in $B^4$ of genus $1$ with boundary $\Kmn\subset S^3=\partial B^4$.

We orient $\Kmn$ and choose $S$ to be the genus two Seifert surface for $\Kmn$ depicted in Figure~\ref{fig:Knm}.
\begin{figure}[h]
\centering
\def\svgscale{2.5}
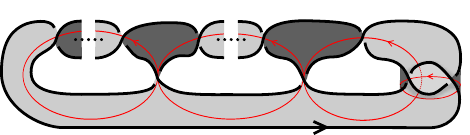
\caption{A Seifert surface $S$ (gray) for the knot $\Kmn$ (black) with $4$ simple closed curves 
(red) such that 
their homology classes provide a basis for $H_1(S,\Z)$. The orientation of $S$ is chosen such that its boundary induces the indicated orientation of $\Kmn$.}
\label{fig:Knm}
\end{figure}

A Seifert matrix for $S$ is
\begin{equation}\label{eq:Seifertmatrix}
M=\left[\begin{array}{cccc}
-m-2&1&0&0\\
0&-n-3&1&0\\
0&0&-1&0\\
0&0&-1&1
\end{array}\right].\end{equation}
Indeed, $M$ is the matrix with entries
\[m_{ij}={\rm lk}(\delta_i,\delta_j^+),\]
where the $\delta_i$ are the curves indicated in Figure~\ref{fig:Knm},
$\delta_j^+$ denotes the curve in $S^3\setminus S$ obtained by
a small perturbation of $\delta_j$ in the positive normal direction to
$S$,
and ${\rm lk}(\cdot,\cdot)$ denotes the linking number between disjoint curves in $S^3$.
In other words, $M$ is the Seifert matrix obtained by writing the Seifert form with respect to the basis 
of $H_1(S,\Z)$ given by $([\delta_1],[\delta_2],[\delta_3],[\delta_4])$.
Note that the signature of the symmetrization of $M$ is $-2$, which establishes $\sigma(\Kmn)=-2$ as claimed earlier.

Next we pick two elements $a$ and $b$ in $H_1(S,\Z)$ such that the Seifert form restricted to the span of $a$ and $b$ is the same as the Seifert form on a genus one Seifert surface of the unknot.
We first consider the case where $m+2$ is a perfect square. The Seifert form on a genus one Seifert surface of the unknot is (for an appropriately chosen basis) given by a Seifert matrix of the form
$\left[\begin{array}{cc}0&1\\0&*\end{array}\right]$. Therefore, we look for elements $a\in H_1(S,\Z)$ such that ${\rm lk}(a,a^+)=0$. Inspection of $M$ reveals that $a={[\delta_1] +\sqrt{m+2}[\delta_4]}$ is such an element.
Setting 
$b=[\delta_2]$, yields
\begin{equation}\label{eq:SeifertForme1e2form+2}\left[\begin{array}{cc}{\rm lk}(a,a^+)&{\rm lk}(a,b^+)\\{\rm lk}(b,a^+)&{\rm lk}(b,b^+)\end{array}\right]=\left[\begin{array}{cc}0&1\\0&-n-3\end{array}\right]\end{equation}
as wanted. Similar choices for $a$ and $b$ are possible in the cases where $n+3$ is a perfect square or $n=m=0$.
Concretely, if we choose $a$ and $b$ in $H_1(S,\Z)$ as 
\begin{align*}a&=[\delta_1],&b=[\delta_2] +\sqrt{n+3}[\delta_4]\quad\quad&\text{if $n+3$ is a square}\quad &\et\\
a&=[\delta_1]+[\delta_4],&b=[\delta_1]+[\delta_2]+ 2[\delta_4]\quad\quad&\text{if $m=n=0$}&,\end{align*} respectively;
then a calculation shows that we have
\begin{equation}\label{eq:SeifertForme1e2}\left[\begin{array}{cc}{\rm lk}(a,a^+)&{\rm lk}(a,b^+)\\{\rm lk}(b,a^+)&{\rm lk}(b,b^+)\end{array}\right]=\left[\begin{array}{cc}*&1\\0&0\end{array}\right].\end{equation}

Our choices of $a$ and $b$ ensures that $a$ intersects $b$ once algebraically on $S$ (for this recall that the algebraic intersection form on $H_1(S,\Z)$ is equal to the antisymmetrization of the Seifert form).
Therefore, the classes $a$ and $b$ are represented by simple closed curves $\alpha$ and $\beta$ in $S$ that intersect once transversally; see e.g.~\cite[Third proof of Theorem~6.4]{FarbMargalit_12_APrimerOnMCG}.
 Let $C$ be a closed neighborhood of the union of $\alpha$ and $\beta$ and let $L$ be the boundary of $C$.
By construction, $L$  separates $S\setminus L$ into two surfaces of genus $1$.
Furthermore, $\Delta_L=1$ since the surface $C$ is a Seifert surface for $L$ and, therefore, \eqref{eq:SeifertForme1e2form+2} respectively \eqref{eq:SeifertForme1e2} is a Seifert matrix for $L$.
\end{proof}
\begin{rem}
The above proof yields the curve $L$ explicitly. Indeed, the only non-explicit step in the above proof\textemdash the realization of the homology classes $a$ and  $b$ by once intersecting simple closed curves $\alpha$ and $\beta$\textemdash can be accomplished by a geometric version of the Euclidian algorithm; compare~\cite[Third proof of Theorem~6.4]{FarbMargalit_12_APrimerOnMCG}.
\end{rem}

\subsection{The smooth genus}
Let $K$ be a knot with reduced alternating diagram $D$. Let us take a chessboard colouring of this diagram such that every crossing has incidence number $\mu=-1$, as according to the convention in Figure~\ref{fig:incidence}.
\begin{figure}[h]
\centering
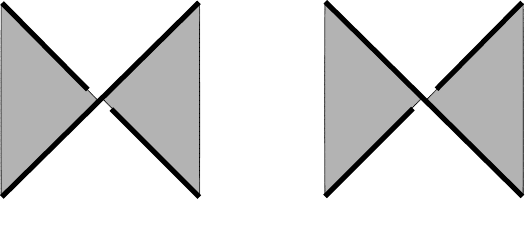
\caption{The incidence number of a crossing.}
\label{fig:incidence}
\end{figure}
If we label the white regions $v_1, \dots, v_{r+1}$, then we can define the Goeritz form $G_D$ on $\oplus_{i=1}^r v_i \Z$ by
\[
v_{i}\cdot v_j =
\begin{cases}
\text{\# crossings around $v_i$ if $i=j$ and}\\
-\text{\# crossings between $v_i$ and $v_j$ if $i\ne j$.}
\end{cases}
\]
As $D$ is a reduced alternating diagram, this is positive definite. The signature of $K$ can be computed by the following formula \cite{gordon1978signature}:
\begin{equation}\label{eq:signature}
\sigma(K)= \rk(G_D) - n_+,
\end{equation}
where $n_+$ denotes the number of positive crossings in $D$. We will use the following obstruction, obtained by combining the work of Gordon and Litherland with Donaldson's theorem, to determine $\gsm(\Kmn)$.
\begin{prop}\label{prop:obstruction}
Let $K$ be a knot with a reduced alternating diagram $D$ and $\sigma(K)\leq 0$. If $\gsm(K)=-\sigma(K)/2$, then there is a lattice embedding
\[G_D \hookrightarrow \Z^{\rk(G_D)-\sigma(K)}.\]
\end{prop}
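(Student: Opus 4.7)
The plan is to combine the classical Gordon--Litherland construction of a $4$-manifold bounded by $\Sigma(K)$ with the double branched cover of $B^4$ over a minimal-genus smooth slice surface, and then apply Donaldson's diagonalisation theorem. Concretely, I will build two smooth compact $4$-manifolds sharing the boundary $\Sigma(K)$ (the double branched cover of $S^3$ over $K$), glue them to get a closed positive-definite smooth $4$-manifold, and read off the lattice embedding from Donaldson's theorem together with a Mayer--Vietoris comparison.

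The first piece, $X_D$, is the Gordon--Litherland $4$-manifold associated with the white chessboard surface of $D$: under the convention $\mu=-1$ it is a smooth compact $4$-manifold with $\partial X_D = \Sigma(K)$ and intersection form $G_D$, which, as already noted, is positive definite. The second piece uses the hypothesis $\gsm(K)=g:=-\sigma(K)/2$: choose a smooth properly embedded genus-$g$ surface $F\subset B^4$ with $\partial F=K$ and let $W$ be the double cover of $B^4$ branched over $F$. Then $\partial W = \Sigma(K)$, an Euler characteristic count gives $b_2(W)=2g$, and the $G$-signature formula (applied to the $\Z/2$-action on $W$ with fixed locus $F$) yields $\sigma(W)=\sigma(K)=-2g$. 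Consequently $W$ is negative definite and $-W$ is positive definite.

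Next I form the closed smooth oriented $4$-manifold
\[
Z := X_D \cup_{\Sigma(K)} (-W),
\]
which is positive definite with $b_2(Z) = \rk(G_D) + 2g = \rk(G_D)-\sigma(K)$. Donaldson's theorem identifies its intersection form with the standard diagonal form on $\Z^{\rk(G_D)-\sigma(K)}$. Since $K$ is alternating, $\det(K)=|\det G_D|\neq 0$, so $\Sigma(K)$ is a rational homology sphere, $H_2(\Sigma(K);\Z)=0$, and Mayer--Vietoris shows that the inclusion $X_D\hookrightarrow Z$ induces an isometric embedding of $(H_2(X_D;\Z),G_D)$ into $(H_2(Z;\Z),Q_Z)$ (intersections between classes supported in $X_D$ may be computed entirely inside $X_D$). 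Composing yields the required $G_D\hookrightarrow \Z^{\rk(G_D)-\sigma(K)}$.

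I expect the main technical point to be the signature identity $\sigma(W)=\sigma(K)$, which is what forces $W$ to be negative definite under the minimal-genus hypothesis and thereby makes the gluing positive definite; once this is in hand, the Gordon--Litherland construction, the closure under gluing, and the Mayer--Vietoris extraction of $G_D$ alone are routine.
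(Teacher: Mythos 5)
Your proposal is correct and takes essentially the same route as the paper's proof: the Gordon--Litherland double branched cover of $B^4$ with intersection form $G_D$, the double branched cover of $B^4$ over a minimal genus smooth slice surface with $b_2=2\gsm(K)$ and signature $\sigma(K)$, gluing along $\Sigma(K)$ to obtain a closed positive-definite smooth $4$-manifold, and Donaldson's diagonalisation theorem. Two minor remarks: with $G_D$ defined on the white regions the branch surface should be the shaded checkerboard surface (a convention slip only, since you correctly assert the form is $G_D$ and positive definite), and your Mayer--Vietoris justification of the lattice embedding makes explicit a step the paper leaves implicit.
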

\begin{proof}
Let $X$ be the 4-manifold obtained by taking a double cover of $B^4$ branched over the (not necessarily orientable) surface given by the shaded regions of $D$. This has intersection form isomorphic to $G_D$ and boundary $\Sigma(K)$ \cite{gordon1978signature}. On the other hand, taking a double cover of $B^4$ branched over a minimal genus smooth surface with boundary $K$ gives a 4-manifold $Y$ with signature $\sigma(K)$ and $b_2(Y)=2\gsm(K)$ \cite{gordon1978signature,Kauffman76signature}. Consider the smooth closed 4-manifold $Z:=X\cup_{\Sigma(K)} -Y$. If $\gsm(K)=-\sigma(K)/2$, then $Z$ is positive definite and $b_2(Z)=\rk(G_D)-\sigma(K)$. Since this implies that the intersection form of $Z$ must be diagonalisable \cite{donaldson87orientation}, the inclusion $X\subset Z$ induces the required embedding of lattices.
\end{proof}
\begin{rem}
A good knot with which to test that Proposition~\ref{prop:obstruction} is compatible with the shading conventions in use is the positive clasp knot $5_2$. This has signature -2 according to \eqref{eq:signature}, and $\gsm(5_2)=g(5_2)=1$. It can easily be verified that its positive Goeritz form, which has rank 3, embeds into $\Z^5$, but the positive Goeritz form for its mirror, which has rank 2, does not embed into $\Z^4$.
\end{rem}
For $\Kmn$, the Goeritz form is isomorphic to $Q_{m,n}$, the lattice generated by vectors $v_1, \dots , v_{2m+2n+8}$, with pairing given by
 \[v_i\cdot v_j =
 \begin{cases}
 3 & \text{if } i=j\in \{2m+3, 2n+2m+7, 2n+2m+8\}\\
 2 & \text{if } i=j\not\in \{2m+3, 2n+2m+7, 2n+2m+8\}\\
 -1 & \text{if } |i-j|=1\\
 0 & \text{if } |i-j|>1.
 \end{cases}
 \]
In fact, $Q_{m,n}$ is isomorphic to the intersection form of the positive definite plumbing $P_{m,n}$, as shown in Figure~\ref{fig:plumbing}. In this case, the manifold $X$ arising in the proof of Proposition~\ref{prop:obstruction} is in fact diffeomorphic to $P_{m,n}$.

\begin{figure}[h]
\[\xymatrix{
 *\txt{2\\$\bullet$\\ \makebox[2pt]{} } \ar@{--}[r]_{\underbrace{\text{\makebox[1.1cm]{}}}_{2m+2}} &
  *\txt{2\\ $\bullet$\\ \makebox[2pt]{}} \ar@{-}[r] &
  *\txt{3 \\ $\bullet$\\ \makebox[2pt]{}} \ar@{-}[r] &
  *\txt{2 \\ $\bullet$\\ \makebox[2pt]{}} \ar@{--}[r]_{\underbrace{\text{\makebox[1.1cm]{}}}_{2n+3}}&
   *\txt{2 \\ $\bullet$\\ \makebox[2pt]{}} \ar@{-}[r] &
   *\txt{3 \\ $\bullet$\\ \makebox[2pt]{}} \ar@{-}[r]&
    *\txt{3\\ $\bullet$\\ \makebox[2pt]{}}}
 \]
\caption{\label{fig:plumbing} The plumbing $P_{m,n}$}
\end{figure}
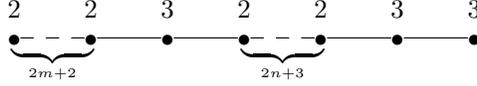

The following proposition completes the proof of Theorem~\ref{thm:main}.
\begin{prop}\label{prop:smoothg}
For all $m,n\geq 0$, the smooth genus of $\Kmn$ is $\gsm(\Kmn)=2$.
\end{prop}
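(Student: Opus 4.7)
The Seifert surface in Figure~\ref{fig:Knm} shows $\gsm(\Kmn)\le 2$, so it suffices to rule out $\gsm(\Kmn) = 1$. Assume for contradiction $\gsm(\Kmn)=1 = -\sigma(\Kmn)/2$. Proposition~\ref{prop:obstruction} then furnishes a lattice embedding
\[ \iota\colon Q_{m,n}\hookrightarrow \Z^{N+2}, \qquad N:=2m+2n+8, \]
which I will analyse combinatorially. Fix an orthonormal basis $e_1,\dots,e_{N+2}$ of $\Z^{N+2}$. Since each generator $v_i$ of $Q_{m,n}$ satisfies $v_i\cdot v_i\in\{2,3\}$, the image $\iota(v_i)$ is a $\pm 1$-signed sum of exactly $v_i\cdot v_i$ distinct basis vectors. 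The chain adjacency $v_i\cdot v_{i+1}=-1$ combined with these small norms forces adjacent pairs to share exactly one basis vector (with opposite signs), except possibly the terminal weight-$3$ pair $v_{N-1}, v_N$, where either one or three shared basis vectors are a priori allowed. Similarly, $v_i\cdot v_j=0$ for $|i-j|>1$ forces non-adjacent pairs $\iota(v_i),\iota(v_j)$ to either have disjoint supports or to share exactly two basis vectors with canceling sign contributions.

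These local constraints make the embedding tight: if $\iota(v_1)$ contributes two basis vectors and each subsequent $\iota(v_i)$ contributes the maximum $v_i\cdot v_i - 1$ new basis vectors, the running total is
\[ 2 + \sum_{i=2}^N (v_i\cdot v_i - 1) = N+4, \]
exceeding the available $N+2$ by two. Hence $\iota$ must save two basis vectors overall. The available saving mechanisms are: (a) the terminal pair $v_{N-1},v_N$ sharing all three basis vectors (saving $2$ at $v_N$); (b) a weight-$3$ vertex having one or two of its basis vectors already present in earlier generators (saving $1$ or $2$); (c) a weight-$2$ vertex $v_i$ with $i\ge 2$ having both basis vectors in earlier generators (saving $1$, which by the non-adjacent constraint forces $\iota(v_i)$ and some earlier $\iota(v_j)$ to share their entire supports with one canceling sign). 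The task is to show that no combination achieving total saving $2$ is realisable.

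Case~(a) can be eliminated directly: if $\iota(v_{N-1})$ and $\iota(v_N)$ have the same support, then a short case analysis on which basis vector $v_{N-2}$ shares with $v_{N-1}$, together with the two sign flips distinguishing $\iota(v_N)$ from $\iota(v_{N-1})$, shows that $v_{N-2}\cdot v_{N-1}=-1$ and $v_{N-2}\cdot v_N=0$ cannot both hold. The hard part is ruling out cases (b) and (c); the plan here is to track how a non-adjacent overlap propagates along the weight-$2$ sub-chains of length $2m+2$ and $2n+3$ flanking the middle weight-$3$ vertex $v_{2m+3}$. Each non-adjacent sharing rigidifies a long stretch of the chain, pinning down the basis vectors in use up to a small number of signs; the goal is to show that propagating such a pattern across $v_{2m+3}$ or into the terminal weight-$3$ pair $v_{N-1},v_N$ must clash with an orthogonality relation $v_i\cdot v_j = 0$. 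The main obstacle is making the analysis uniform in $m,n\ge 0$: because the sub-chains have arbitrary length, the contradiction must come from a structural incompatibility near the three weight-$3$ vertices—whose precise positions $2m+3$, $N-1$, $N$ are the reason the obstruction works—rather than from a finite check.
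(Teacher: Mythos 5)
Your overall strategy coincides with the paper's: apply Proposition~\ref{prop:obstruction} to convert the hypothesis $\gsm(\Kmn)=1=-\sigma(\Kmn)/2$ into a lattice embedding $Q_{m,n}\hookrightarrow\Z^{N+2}$, and kill that embedding by combinatorics on vectors of norm $2$ and $3$. Your local lemmas are also correct: coefficients lie in $\{0,\pm1\}$, adjacent pairs share an odd number of coordinates (hence exactly one, except the terminal norm-$3$ pair), non-adjacent pairs share $0$ or $2$, the generic count is $N+4$, and your elimination of case (a) works, since $v_{N-2}$ must share exactly one basis vector with the common support and hence has nonzero pairing with both $v_{N-1}$ and $v_N$.

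However, there is a genuine gap: cases (b) and (c), which you yourself identify as the hard part, are only announced as a plan (``track how a non-adjacent overlap propagates\dots the goal is to show\dots''), and that is exactly where all the content lies. Two points show the deferred step is delicate rather than routine. First, a saving of $1$ \emph{is} realisable: for $m=0$ the paper exhibits an embedding of $Q_{0,n}$ into $\Z^{N+3}$ by taking $v_3=e_1+e_2-f_1$, a norm-$3$ vertex reusing two earlier basis vectors---precisely your mechanism (b). So your argument cannot exclude mechanisms (b)/(c) wholesale; it must be fine enough to permit single savings at the middle vertex when $m=0$ while excluding any combination totalling $2$, and your proposal gives no mechanism for this distinction. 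Second, your mechanism (c) (a norm-$2$ vertex reusing both coordinates, e.g.\ $v_3=-e_1-e_2$) is excluded in the paper not by propagation bookkeeping but by a parity trick: if $v_3=-e_1-e_2$, then every $x$ with $x\cdot v_1=0$ satisfies $x\cdot v_3=2x\cdot e_1\equiv 0\bmod 2$, contradicting $v_4\cdot v_3=-1$, $v_4\cdot v_1=0$. The paper's organisation avoids your savings enumeration altogether: it forces the structure sequentially ($v_i=e_i-e_{i+1}$ along the first chain, $v_{2m+3+i}=f_i-f_{i+1}$ along the second, $e_i\cdot f_j=0$ by orthogonality of the chains, and new unit vectors $f_{2n+5},\dots,f_{2n+8}$ at the terminal norm-$3$ pair, plus $e_{2m+4}$ at the middle vertex when $m\geq1$, with the $n=0$ alternative $v_{2m+2n+7}=-(f_1+f_2+f_3)$ ruled out because no admissible $v_{2m+2n+8}$ then exists), yielding the clean lower bound $M\geq\rk(Q_{m,n})+3$ for $m=0$ and $M\geq\rk(Q_{m,n})+4$ for $m\geq1$, both contradicting $M=N+2$. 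This forced-structure argument is uniform in $m,n$ precisely because each step of the chain is rigid, which resolves the uniformity worry you raise; to complete your write-up you would need to carry out essentially this analysis, so the missing step is the proof itself, not a routine verification.
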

\begin{proof}
It is easy to verify, either by using \eqref{eq:signature} or computing directly from the Seifert matrix \eqref{eq:Seifertmatrix}, that $\sigma(\Kmn)=-2$ for all $m,n\geq 0$. Since $\Kmn$ has a genus two Seifert surface, this shows $1\leq \gsm(\Kmn)\leq 2$. Thus the proposition follows from Proposition~\ref{prop:obstruction} combined with the following claim.
\begin{claim} Suppose that we have an embedding $Q_{m,n}\hookrightarrow \mathbb{Z}^M$, then
\[M \geq
\begin{cases}
\rk (Q_{m,n})+3 &\text{if } m=0\\
\rk (Q_{m,n})+4 &\text{if } m\geq 1.
\end{cases}\]
\end{claim}
\begin{proof}[Proof of Claim]
Suppose that $\mathbb{Z}^M$ contains $Q_{m,n}$ as a sublattice. 
Throughout this proof, we will use $e_i$ and $f_j$ to denote unit vectors in $\Z^M$ with the assumption that $e_i \cdot e_j = f_i \cdot f_j=0$ whenever $i \ne j$.

First consider the vectors of norm 2 in $\Z^M$ given by $v_1, \dots , v_{2m+2}$. Since $v_1$ and $v_2$ both have norm 2 and $v_1\cdot v_2=-1$, there must be unit vectors $e_1, e_2, e_3 \in \Z^M$ such that $v_1=e_1-e_2$ and $v_2=e_2-e_3$.
Suppose now that $m\geq 1$. In this case, $v_3$ also has $\norm{v_3}=2$, and additionally satisfies $v_3\cdot v_2=-1$ and $v_3\cdot v_1=0$. Thus we see that either (1) $v_3=-e_1-e_2$ or (2) there is $e_4 \in \Z^M$ such that $v_3=e_3-e_4$. However, if (1) holds, then any vector $x$ satisfying $x\cdot v_1=0$, must also satisfy $x \cdot v_3 = 2x\cdot e_1 \equiv 0 \bmod 2$, meaning that we obtain a contradiction when we consider $v_4$. Thus we must have $v_3=e_3-e_4$. Continuing in this way, we see that there are $e_1, \dots, e_{2m+3} \in \Z^M$ such that $v_i=e_i-e_{i+1}$ for $i=1, \dots , 2n+2$.

The vectors $v_{2m+4}, \dots, v_{2m+2n+6}$ also form a chain of vectors of norm 2 in $\Z^M$, so by an identical argument to the preceding paragraph, there must exist $f_1, \dots, f_{2n+4} \in \Z^M$ such that for $i=1, \dots 2n+3$, we have $v_{2m+3+i}=f_i-f_{i+1}$. Furthermore, since $v_i \cdot v_j=0$ for all $i<2m+3$ and $j>2m+3$, these must satisfy $e_i\cdot f_j=0$ for all $i$ and all $j$.

Now consider $v_{2m+2n+7}$. Since $v_{2m+2n+7}\cdot v_i=0$ for $i\leq 2m+2n+5$ and $v_{2m+2n+7}\cdot v_{2m+2n+6}=-1$, either $(1)$ $n=0$ and $v_{2m+2n+7}=-(f_1+f_2+f_3)$ or $(2)$ $v_{2m+2n+7}=f_{2n+4}+f_{2n+5}+f_{2n+6}$, where $f_{2n+5}$ and $f_{2n+6}$ are orthogonal to the $e_i$. Since there is no vector $v_{2m+2n+8}$ of norm 3 with $v_{2m+2n+8}\cdot (f_1+f_2+f_3)=1$ and $v_{2m+2n+8}\cdot v_i=0$ for $i\leq 2m+2n+6$, it follows that $(2)$ must always hold and $v_{2m+2n+7}=f_{2n+4}+f_{2n+5}+f_{2n+6}$. By similar considerations, we see that there must be $f_{2n+7}$ and $f_{2n+8}$ which are orthogonal to the $e_i$ and such that $v_{2m+2n+8}=-f_{2n+5}+f_{2n+7}+f_{2n+8}$ or $v_{2m+2n+8}=-f_{2n+6}+f_{2n+7}+f_{2n+8}$.

The set $\{e_1, \dots, e_{2m+3},f_1,\dots, f_{2n+8}\}$ gives a set of $2n+2m+11$ linearly independent unit vectors in $\Z^M$, showing that $M\geq 2n+2m+11 =\rk( Q_{m,n}) + 3$. In fact, if $m=0$, then $\Z^{2m+2n+11}$ contains a sublattice isomorphic to $Q_{m,n}$ (we can take $v_{3}=e_1+e_2-f_1$).

It remains only to consider $v_{2m+3}$ when $m\geq 1$. Similarly, we can deduce that $v_{2m+3}$ takes the form  $v_{2m+3}=e_{2m+3}+e_{2m+4} -f_1$, where $e_{2m+4}\in \Z^M$ is a unit vector which is orthogonal to the $f_i$. This shows that if $m\geq 1$, then there are at least $2m+2n+12$ linearly independent unit vectors in $\Z^M$. Thus we can conclude that $M\geq \rk(Q_{m,n}) + 4$, completing the proof of the claim.
\end{proof}
This completes the proof of the proposition.
\end{proof}
\bibliographystyle{alpha}
\bibliography{2bridge}

\begin{thebibliography}{McC15}

\bibitem[BL15]{BaaderLewark_15_Stab4GenusOFAltKnots}
Sebastian Baader and Lukas Lewark.
\newblock The stable 4-genus of alternating knots.
\newblock {\em ArXiv e-prints}, 2015.
\newblock ArXiv:1505.03345 [math.GT].

\bibitem[CC14]{Knotinfo}
J.C. Cha and Livingston C.
\newblock Knotinfo: table of knot invariants.
\newblock \url{http://www.indiana.edu/~knotinfo}, 2014.

\bibitem[CG86]{CassonGordon_86}
A.~J. Casson and C.~McA. Gordon.
\newblock Cobordism of classical knots.
\newblock In {\em \`{A} la recherche de la topologie perdue}, volume~62 of {\em
  Progr. Math.}, pages 181--199. Birkh\"auser Boston, Boston, MA, 1986.
\newblock With an appendix by P. M. Gilmer.

\bibitem[Don87]{donaldson87orientation}
S.~K. Donaldson.
\newblock The orientation of {Y}ang-{M}ills moduli spaces and {$4$}-manifold
  topology.
\newblock {\em J. Differential Geom.}, 26(3):397--428, 1987.

\bibitem[EL09]{EisermannLamm_09}
Michael Eisermann and Christoph Lamm.
\newblock For which triangles is {P}ick's formula almost correct?
\newblock {\em Experiment. Math.}, 18(2):187--191, 2009.

\bibitem[Fel15]{Feller_15_DegAlexUpperBoundTopSliceGenus}
Peter Feller.
\newblock The degree of the alexander polynomial is an upper bound for the
  topological slice genus.
\newblock {\em ArXiv e-prints}, 2015.
\newblock ArXiv:1504.01064 [math.GT].

\bibitem[FM12]{FarbMargalit_12_APrimerOnMCG}
Benson Farb and Dan Margalit.
\newblock {\em A primer on mapping class groups}, volume~49 of {\em Princeton
  Mathematical Series}.
\newblock Princeton University Press, Princeton, NJ, 2012.

\bibitem[Fre82]{Freedman_82_TheTopOfFour-dimensionalManifolds}
Michael~Hartley Freedman.
\newblock The topology of four-dimensional manifolds.
\newblock {\em J. Differential Geom.}, 17(3):357--453, 1982.

\bibitem[GL78]{gordon1978signature}
C.~McA. Gordon and R.~A. Litherland.
\newblock On the signature of a link.
\newblock {\em Invent. Math.}, 47(1):53--69, 1978.

\bibitem[KM93]{KronheimerMrowka_Gaugetheoryforemb}
Peter~B. Kronheimer and Tomasz~S. Mrowka.
\newblock Gauge theory for embedded surfaces. {I}.
\newblock {\em Topology}, 32(4):773--826, 1993.

\bibitem[KT76]{Kauffman76signature}
Louis~H. Kauffman and Laurence~R. Taylor.
\newblock Signature of links.
\newblock {\em Trans. Amer. Math. Soc.}, 216:351--365, 1976.

\bibitem[McC15]{Mccoy15calculating}
Duncan McCoy.
\newblock A note on calculating the slice genus of 11 and 12 crossing knots.
\newblock {\em ArXiv e-prints}, 2015.
\newblock arXiv:1508.01098 [math.GT].

\bibitem[Mil15]{Miller_15_TopSlicenessOf2bridge}
Allison~N. Miller.
\newblock A note on the topological sliceness of some 2-bridge knots.
\newblock {\em ArXiv e-prints}, 2015.
\newblock ArXiv:1507.01616 [math.GT].

\bibitem[OS03]{Ozsvath03alternating}
Peter Ozsv{\'a}th and Zolt{\'a}n Szab{\'o}.
\newblock Heegaard {F}loer homology and alternating knots.
\newblock {\em Geom. Topol.}, 7:225--254 (electronic), 2003.

\bibitem[Ras10]{rasmussen10khovanov}
Jacob Rasmussen.
\newblock Khovanov homology and the slice genus.
\newblock {\em Invent. Math.}, 182(2):419--447, 2010.

\bibitem[Rud84]{Rudolph_84_SomeTopLocFlatSurf}
Lee Rudolph.
\newblock Some topologically locally-flat surfaces in the complex projective
  plane.
\newblock {\em Comment. Math. Helv.}, 59(4):592--599, 1984.

\end{thebibliography}
\end{document}